 \newtheorem{thm}{Theorem}[section]
 \newtheorem{cor}[thm]{Corollary}
 \newtheorem{lem}[thm]{Lemma}
 \newtheorem{prop}[thm]{Proposition}
 \theoremstyle{definition}
 \theoremstyle{remark}
\newcommand{\M}{\mathbb{M}}
\newcommand{\A}{\mathbb{A}}
\newcommand{\X}{\mathbb{X}}
\newcommand{\Hh}{\mathbb{H}}
\newcommand{\p}{P_{1}}
\newcommand{\q}{P_{2}}
\begin{document}

\title[continuous linear maps on reflexive algebras behaving likes ...]
 {continuous linear maps on reflexive algebras behaving like Jordan left derivations at idempotent-product
elements}

\author{ B. Fadaee \quad  and \quad H. Ghahramani}
\thanks{{\scriptsize
\hskip -0.4 true cm \emph{MSC(2010)}:  47B47; 47L35; 47B49.
\newline \emph{Keywords}: Jordan Left derivable, reflexive algebras, CSL-algebras, CDC-algebras, nest algebras.\\}}

\address{Department of
Mathematics, University of Kurdistan, P. O. Box 416, Sanandaj,
Iran.}

\email{h.ghahramani@uok.ac.ir; hoger.ghahramani@yahoo.com}

\address{Department of
Mathematics, University of Kurdistan, P. O. Box 416, Sanandaj,
Iran.}

%\email{razeyhghoochi@yahoo.com}

\address{}

\email{}

\thanks{}

\thanks{}

\subjclass{}

\keywords{}

\date{}

\dedicatory{}

\commby{}

%%% ----------------------------------------------------------------------

\begin{abstract}
Let $\A$ be a Banach algebra with unity $\textbf{1}$ and $ \M $ be a unital Banach left $ \A $-module. let $ \delta: \A \rightarrow \M$ be a continuous
linear map with the property that
\[ a,b\in \A, \quad ab+ba=z \Rightarrow 2a\delta(b)+2b\delta(a)=\delta(z),  \]
where $z\in \A$. In this article, first we characterize $\delta$ for $z=\textbf{1}$. Then we consider the case $\A=\M=Alg \mathcal{L}$, where $Alg \mathcal{L}$ is areflexive algebra on a Hilbert space $ \Hh $ and $z=P$ is a non-triavial idempotent in $\A$ with $P(\Hh) \in \mathcal{L}$ and describe $\delta$. Finally we apply the main results to $CSL$-algebras, irreducible $CDC$ algebras and nest algebras on a Hilbert space $\Hh$.
\end{abstract}

%%% ----------------------------------------------------------------------
\maketitle
%%% ----------------------------------------------------------------------

\section{Introduction}
Throughout this paper all algebras and vector spaces will be over
$\mathbb{F}$, where $\mathbb{F}$ is either the real field
$\mathbb{R}$ or the complex field $\mathbb{C}$. Let $\A$ be an
algebra with unity $1$, $\M$ be a left $\A$-module and $\delta:
\A\rightarrow \M$ be a linear mapping. $\delta$ is said to be a \emph{Jordan left derivation} if
$\delta(ab+ba)=2a\delta(b)+2b\delta(a)$ for any $a,b \in \A$, or equivalently,
if $\delta(a^{2})=2a\delta(a)$ for any $a \in \A$.
The concepts of Jordan left derivation was introduced by
Bre$\check{\textrm{s}}$ar and Vukman in \cite{Bre}. For results
concerning Jordan left derivations we refer
the readers to \cite{Gho} and the references therein.
\par
In recent years, several authors studied the linear (additive)
maps that behave like homomorphisms, derivations or left
derivations when acting on special products (for instance, see
\cite{Bre2, Gha0, Hou, Ji} and the references therein). In
this article we study the continuous linear maps on reflexive algebras behaving like Jordan left derivations at idempotent-product. In fact we consider the following condition on a continuous linear map $\delta$ from a Banach algebra $\A$ into a  Banach left $\A$-module $\M$:
\[ a,b\in \A, \quad ab+ba=z \Rightarrow 2a\delta(b)+2b\delta(a)=\delta(z),  \]
where $z\in \A$. First we describe $\delta$ on a Bnach algebra $\A$ with unity $\textbf{1}$, when $z=\textbf{1}$. Then we assume that $\A=\M=Alg \mathcal{L}$, where $Alg \mathcal{L}$ is areflexive algebra on a Hilbert space $ \Hh $ and $z=P$ is a non-triavial idempotent in $\A$ with range
$P(\Hh) \in \mathcal{L}$ and characterize $\delta$. Finally we apply the main results to $CSL$-algebras, irreducible $CDC$ algebras and nest algebras on a Hilbert space $\Hh$.
\par
The following are the notations and terminologies which are used
throughout this article.
\par
Let $\A$ be a Banach algebra with unity $\textbf{1}$. Denote by $Inv(\A)$ the set of invertible elements of $\A$. $Inv(\A)$ is an open subset of $\A$ and hence it is a disjoint union of open connected subsets, the components of $Inv(\A)$. The component containing $\textbf{1}$ is called the \emph{principal component} of $Inv(\A)$ and it is denoted by $Inv_{0}(\A)$. We denote by $e^{\A}$ the range of the exponential function in $\A$, i.e.
\[ e^{\A}=\{ e^{a}\, | \, a\in \A\}\]
and we have $e^{\A}\subseteq Inv_{0}(\A)$.
\par
Let $\Hh$ be a Hilbert space. We denote by $\mathcal{B}(\Hh)$ the
algebra of all bounded linear operators on $\Hh$. Also the identity operator on $\Hh$ is denoted by $I$ and the projection of $ \Hh$ onto the closed subspace $\mathcal{L}$ is denoted by $P_{L}$. A \emph{subspace lattice} $\mathcal{L}$ on a Hilbert space $\Hh$ is a collection of closed (under norm
topology) subspaces of $\Hh$ which is closed under the formation
of arbitrary intersection (denoted by
$\wedge$) and closed linear span (denoted by
$\vee$), and which includes $\{0\}$ and $\Hh$. If $ \mathcal{L} $ is subspace lattice of $\Hh$ and $L\in\mathcal{L}$, we define
\[ L_{-}=\vee \lbrace M\in \mathcal{L} \, \vert \, L\nsubseteq M\rbrace, \]
\[  L_{+}=\wedge \lbrace M\in \mathcal{L} \, \vert \, M\nsubseteq L\rbrace.\]
A totally ordered subspace lattice $ \mathcal{N}$ on $\X$ is called a \emph{nest}. A subspace lattice $ \mathcal{L}$ on a Hilbert space $ \mathbb{H}$
is called a \emph{commutative subspace lattice}, or a \emph{CSL},
if the projections of $ \mathbb{H}$ onto the subspaces of $\mathcal{L}$ commute with each other. A subspace lattice $\mathcal{L}$ is said to be \emph{completely distributive} if $L=\vee\lbrace M\in \mathcal{L} \, \vert \, L\nsubseteq M_{-}\rbrace$ for every $L\in \mathcal{L}$ with $L\neq \lbrace 0 \rbrace$. When $\mathcal{L} \neq \{\{0\},\Hh\}$, we say that $\mathcal{L}$ is \emph{non-trivial}.
\par
For a subspace lattice $ \mathcal{L}$, we define the \emph{associated subspace lattice} $Alg\mathcal{L}$ by
\[ Alg\mathcal{L}=\{ T\in \mathcal{B}(\Hh)\,|\, T(L)\subseteq L
\,for\, all \,L\in \mathcal{L}\}. \]
Obviously, $Alg\mathcal{L}$ is a unital weakly closed subalgebra of $ \mathcal{B}(\Hh) $. Dually, if $\A$ is a subalgebra of $ \mathcal{B}(\Hh) $, by $Lat\A$ we denote the collection of closed subspaces of $\Hh$ that are left invariant by each operator in $\A$. An algebra $\A\subseteq\mathcal{B}(\Hh) $ is \emph{reflexive} if $\A=AlgLat\A$. Clearly, every reflexive algebra is of the form $Alg\mathcal{L}$ for some subspace lattice and vice versa. We call $Alg\mathcal{L}$ a \emph{CSL-algebra} if $\mathcal{L}$ is a commutative subspace lattice, and a \emph{CDC-algebra} if $\mathcal{L}$ is completely distributive $CSL$. Also for a nest $ \mathcal{N} $, the algeba $Alg\mathcal{N}$ is called a \emph{nest algebra}. Recall that a $CSL$-algebra
$Alg\mathcal{L}$ is irreducible if and only if the commutant is trivial, i.e. $(Alg\mathcal{L})^{\prime}=\mathbb{C}I $. In particular, nest algebras are irreducible $CDC$-algebras.
%%%-----------------------------------------------------------------------
\section{Main results}
First, we characterize continuous linear maps of unital Banach algebras which are like Jordan left derivations at unit-Jordan product elements.
\par
In order to prove our results we need the following result.
\begin{lem} \label{c1}
Let $\A$ be a Banach algebra with unity $\textbf{1}$ and $ \M $ be a unital Banach left $ \A $-module. let $ \delta: \A \rightarrow \M$ be a continuous
linear map with the property that
\[ a,b\in Inv_{0}(\A) \Rightarrow a\delta(a^{-1})+a^{-1}\delta(a)=\delta(\textbf{1}).\]
Then
$\delta$ is a Jordan left derivation.
\end{lem}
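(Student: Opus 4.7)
The plan is to exploit the exponential function, which produces a one-parameter family lying in $Inv_{0}(\A)$ along which the hypothesis may be differentiated (or, equivalently, expanded in a power series and compared coefficient-by-coefficient).

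First, I would plug $a=\textbf{1}$ into the hypothesis to obtain $2\delta(\textbf{1})=\delta(\textbf{1})$, hence $\delta(\textbf{1})=0$. The working identity then becomes $a\delta(a^{-1})+a^{-1}\delta(a)=0$ for every $a\in Inv_{0}(\A)$.

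Next, fix an arbitrary $a\in\A$ and note that for every real $t$, the element $e^{ta}$ lies in $e^{\A}\subseteq Inv_{0}(\A)$ with inverse $e^{-ta}$. Substitute $e^{ta}$ into the identity to get
\[
e^{ta}\delta(e^{-ta})+e^{-ta}\delta(e^{ta})=0.
\]
Since $\delta$ is continuous and linear, and the series $\sum_{n\geq 0}\frac{t^{n}}{n!}a^{n}$ converges absolutely in $\A$, we may pass $\delta$ through the sum to write $\delta(e^{ta})=\sum_{n\geq 0}\frac{t^{n}}{n!}\delta(a^{n})$, and similarly for $\delta(e^{-ta})$. The displayed identity therefore yields a power series in $t$ with values in $\M$ that vanishes identically; by the identity theorem for $\M$-valued analytic functions, every coefficient must be zero.

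I would then single out the coefficient of $t^{2}$. A direct Cauchy-product computation gives
\[
\sum_{n+m=k}\frac{a^{n}\delta(a^{m})}{n!\,m!}\bigl[(-1)^{m}+(-1)^{n}\bigr]=0
\qquad(k\geq 0).
\]
For $k=2$ the triples $(n,m)\in\{(0,2),(1,1),(2,0)\}$ contribute $\delta(a^{2})$, $-2a\delta(a)$, and $a^{2}\delta(\textbf{1})=0$, respectively, giving
\[
\delta(a^{2})-2a\delta(a)=0.
\]
Since $a\in\A$ was arbitrary, this is exactly the Jordan left derivation identity. The only subtle point is the justification of exchanging $\delta$ with the exponential series and the coefficient extraction, both of which rest on the continuity of $\delta$; everything else is formal manipulation.
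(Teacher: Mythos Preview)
Your proof is correct and follows the same strategy as the paper: show $\delta(\textbf{1})=0$, substitute $e^{ta}$ (the paper uses a complex parameter $\lambda$, but the argument is identical), expand as a Cauchy product, and read off the coefficient of $t^{2}$ to obtain $\delta(a^{2})=2a\delta(a)$. The only cosmetic difference is how the double sum is grouped before extracting the degree-two coefficient.
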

\begin{proof}
Since $\textbf{1}\in Inv_{0}(\A) $, it follows that $2\delta(\textbf{1})=\delta(\textbf{1})$. Hence $\delta(\textbf{1})=0$.
\par
Let $a$ be in $\A$. For each scalar $\lambda \in \mathbb{C}$, we
have $e^{\lambda a}\delta(e^{-\lambda a})+e^{-\lambda a}\delta(e^{\lambda a})=0$, since
$e^{\A}\subseteq Inv_{0}(\A)$. Thus
\begin{equation*}
\begin{split}
0&=e^{\lambda a}\delta(e^{-\lambda a})+e^{-\lambda a}\delta(e^{\lambda a})\\&=\Sigma_{n=0}^{\infty}\frac{\lambda^{n}a^{n}}{n!}\delta(\Sigma_{m=0}^{\infty}\frac{(-1)^{m}\lambda^{m}a^{m}}{m!})+\Sigma_{m=0}^{\infty}\frac{(-1)^{m}\lambda^{m}a^{m}}{m!}\delta(\Sigma_{n=0}^{\infty}\frac{\lambda^{n}a^{n}}{n!})\\&=\Sigma_{m=0}^{\infty}\Sigma_{n=0}^{\infty}\frac{(-1)^{m}\lambda^{m+n}}{m!n!}(a^{n}\delta(a^{m})+a^{m}\delta(a^{n}))\\&
=\Sigma_{k=0}^{\infty}\lambda^{k}(\sum_{m+n=k}\frac{(-1)^{m}}{m!n!}(a^{n}\delta(a^{m})+a^{m}\delta(a^{n})),
\end{split}
\end{equation*}
since $\delta$ is a continuous linear map. Consequently,
\begin{equation}\label{e1}
\sum_{m+n=k}\frac{(-1)^{m}}{m!n!}(a^{n}\delta(a^{m})+a^{m}\delta(a^{n}))=0
\end{equation}
for all $a\in \A$ and $k\geq 0$. Taking $k=2$ in \eqref{e1}, we obtain
\[
\frac{1}{2}(\delta(a^{2})+a^{2}\delta(\textbf{1}))-(a\delta(a)+a\delta(a))+\frac{1}{2}
(a^{2}\delta(\textbf{1})+\delta(a^{2}))=0,
\]
for any $a\in \A$. So from $\delta(\textbf{1})=0$, we have
\[\delta(a^{2})=2a\delta(a), \, \, \, (a\in \A).\]
\end{proof}
\begin{prop}\label{ld1}
Let $\A$ be a Banach algebra with unity $1$ and $\M$ be a unital
Banach left $\A$-module. Let $\delta:\A \rightarrow \M$ be a
continuous linear map satisfying
\[a,b\in \A, \quad ab+ba=\textbf{1} \Rightarrow 2a\delta(b)+2b\delta(a)=\delta(\textbf{1}), \quad (*)\]
then $\delta$ is a Jordan left derivation.
\end{prop}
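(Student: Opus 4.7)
The plan is to reduce the statement directly to Lemma \ref{c1}. That lemma tells us that any continuous linear map $\delta:\A\to\M$ satisfying $a\delta(a^{-1})+a^{-1}\delta(a)=\delta(\textbf{1})$ for every $a\in Inv_{0}(\A)$ is automatically a Jordan left derivation. So it is enough to verify this invertibility-based identity from the hypothesis $(*)$.

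The key observation is choosing the right pair to feed into $(*)$. For an arbitrary $a\in Inv_{0}(\A)$, I would set $b:=\tfrac{1}{2}a^{-1}$. Then
\[
ab+ba=\tfrac{1}{2}aa^{-1}+\tfrac{1}{2}a^{-1}a=\textbf{1},
\]
so the pair $(a,b)$ lies in the scope of $(*)$. Applying $(*)$ and using linearity of $\delta$ on the scalar $\tfrac{1}{2}$, I get
\[
2a\,\delta\!\left(\tfrac{1}{2}a^{-1}\right)+2\cdot\tfrac{1}{2}a^{-1}\,\delta(a)=a\,\delta(a^{-1})+a^{-1}\,\delta(a)=\delta(\textbf{1}),
\]
which is exactly the hypothesis needed to invoke Lemma \ref{c1}. (Note that the argument actually works for every invertible $a$, not just those in the principal component, but only the latter is needed.)

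There is really no serious obstacle here: once one sees that the invertibility identity of Lemma \ref{c1} is a special case of $(*)$ via the substitution $b=\tfrac{1}{2}a^{-1}$, the proposition follows immediately. The only minor thing to keep in mind is that we must respect the linearity of $\delta$ in pulling the factor $\tfrac{1}{2}$ out of $\delta(\tfrac{1}{2}a^{-1})$, which is legitimate since $\delta$ is (in particular) $\mathbb{F}$-linear. Hence $\delta$ satisfies the hypothesis of Lemma \ref{c1} and is therefore a Jordan left derivation.
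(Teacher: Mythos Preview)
Your proof is correct and essentially identical to the paper's: both reduce directly to Lemma~\ref{c1} by feeding an invertible element and half its inverse into $(*)$ and using linearity to cancel the $\tfrac{1}{2}$. The only cosmetic difference is that the paper places the factor $\tfrac{1}{2}$ on $a$ rather than on $a^{-1}$.
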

\begin{proof}
Let $a\in Inv_{0}(\A)$ be arbitrary. Since $(\frac{1}{2}a)a^{-1}+a^{-1}(\frac{1}{2}a)=\textbf{1}$, it follows that
\[ 2(\frac{1}{2}a)\delta(a^{-1})+2a^{-1}\delta(\frac{1}{2}a)=\delta(\textbf{1}).\]
So
\[a\delta(a^{-1})+a^{-1}\delta(a)=\delta(\textbf{1}),\]
for all $a\in Inv_{0}(\A)$. Therefore from Lemma~\ref{c1}, $\delta$ is a Jordan left derivation.
\end{proof}
\begin{cor}\label{ld2}
Let $\A$ be a Banach algebra with unity $1$ and $\M$ be a unital
Banach left $\A$-module. Let $x,y\in Z(\A)$ with $x+y=\textbf{1}$ and let
$\delta:\A \rightarrow \M$ be a continuous linear map. If
$\delta$ satisfying
\[a,b\in \A, \quad ab+ba=x \Rightarrow 2a\delta(b)+2b\delta(a)=\delta(x), \]
and
\[a,b\in \A, \quad ab+ba=y \Rightarrow 2a\delta(b)+2b\delta(a)=\delta(y). \]
Then $\delta$ is a Jordan left derivation.
\end{cor}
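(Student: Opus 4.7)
The strategy is to reduce to Proposition~\ref{ld1} by verifying its hypothesis, namely that $ab+ba=\mathbf{1}$ forces $2a\delta(b)+2b\delta(a)=\delta(\mathbf{1})$. The key trick is that since $x,y$ are central and $x+y=\mathbf{1}$, we can split any equation of the form $ab+ba=\mathbf{1}$ into two equations of the form required by the two hypotheses.

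Concretely, suppose $a,b\in\A$ satisfy $ab+ba=\mathbf{1}$. Because $x\in Z(\A)$, I compute
\[
(ax)b+b(ax)=axb+bax=(ab)x+(ba)x=(ab+ba)x=x,
\]
and similarly $(ay)b+b(ay)=y$. Applying the two hypotheses to the pairs $(ax,b)$ and $(ay,b)$ respectively yields
\[
2(ax)\delta(b)+2b\delta(ax)=\delta(x),\qquad 2(ay)\delta(b)+2b\delta(ay)=\delta(y).
\]
Adding these two identities and using linearity of $\delta$ together with $x+y=\mathbf{1}$ gives
\[
2a\delta(b)+2b\delta(a)=\delta(x)+\delta(y)=\delta(\mathbf{1}).
\]

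Thus $\delta$ satisfies the hypothesis of Proposition~\ref{ld1}, and therefore $\delta$ is a Jordan left derivation. There is no real obstacle; the only nontrivial step is noticing that multiplication by a central idempotent-like pair $(x,y)$ summing to $\mathbf{1}$ permits the splitting above. It is essential here that $x$ and $y$ lie in the center, since the rearrangement $axb+bax=(ab+ba)x$ fails without this.
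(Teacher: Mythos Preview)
Your proof is correct and follows essentially the same route as the paper: multiply $ab+ba=\mathbf{1}$ by the central elements $x$ and $y$ to obtain $(ax)b+b(ax)=x$ and $(ay)b+b(ay)=y$, apply the two hypotheses, add, and invoke Proposition~\ref{ld1}. Your write-up is in fact slightly more explicit about why centrality is needed in the rearrangement step.
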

\begin{proof}
For $a,b\in \A$ with $ab+ba=1$, we have $abx+bax=x$ and $aby+bay=y$. So $axb+bax=x$ and $ayb+bay=y$. From hypothesis, it follows that
\[ \delta(x)=2ax\delta(b)+2b\delta(ax), \]
and
\[ \delta(y)=2ay\delta(b)+2b\delta(ay). \]
Combining the two above equations, we get that
\[
\delta(\textbf{1})=\delta(x+y)=2a\delta(b)+2b\delta(a).
\]
So from Proposition~\ref{ld1}, $\delta$ is a Jordan left derivation.
\end{proof}
If $\A$ is a $CSL$-algebra or a unital semisimple Banach algebra,
then by \cite{Ji} and \cite{Vuk} every continuous Jordan left
derivation on $\A$ is zero. Hence from Proposition~\ref{ld1}, we have the next corollary.
\begin{cor}
Let $\A$ be a $CSL$-algebra or a unital semisimple Banach algebra, and let $\delta:\A\rightarrow \A$ be a continuous linear map satisfying
\[a,b\in \A, \quad ab+ba=\textbf{1} \Rightarrow 2a\delta(b)+2b\delta(a)=\delta(\textbf{1}). \]
Then $\delta$ is zero.
\end{cor}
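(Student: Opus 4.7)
The plan is to observe that the hypothesis on $\delta$ is exactly the hypothesis of Proposition~\ref{ld1} (with $\M=\A$ viewed as a unital Banach left $\A$-module over itself), so the first step is a direct appeal to that proposition to conclude that $\delta$ is a continuous Jordan left derivation from $\A$ into $\A$. Since neither hypothesis (CSL-algebra or unital semisimple Banach algebra) is used in that reduction, no further verification of the implication $ab+ba=\mathbf{1} \Rightarrow 2a\delta(b)+2b\delta(a)=\delta(\mathbf{1})$ is needed beyond what Proposition~\ref{ld1} already supplies.

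With $\delta$ now known to be a continuous Jordan left derivation, the second step is to invoke the structural results already alluded to in the excerpt: by \cite{Ji}, every continuous Jordan left derivation on a CSL-algebra is identically zero, and by \cite{Vuk}, the same is true on a unital semisimple Banach algebra. In either case this yields $\delta=0$, which is the desired conclusion.

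There is no genuine obstacle here; the corollary is a packaging of Proposition~\ref{ld1} together with two known vanishing theorems. The only thing to be careful about is ensuring the citations apply verbatim: the semisimple case requires continuity (which we have by assumption), and the CSL-algebra case requires the codomain to be the algebra itself (also given). Once these are matched to the statements of \cite{Ji} and \cite{Vuk}, the proof is a single line after the appeal to Proposition~\ref{ld1}.
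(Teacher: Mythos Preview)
Your proposal is correct and matches the paper's own argument exactly: apply Proposition~\ref{ld1} to deduce that $\delta$ is a continuous Jordan left derivation, then invoke \cite{Ji} (for $CSL$-algebras) and \cite{Vuk} (for unital semisimple Banach algebras) to conclude $\delta=0$. There is nothing to add or change.
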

We continue by characterizing the continuous linear maps behaving like Jordan left derivations at nontrivial idempotent-Jordan product elements on reflexive algebras.
\begin{thm}\label{asli}
Let $Alg \mathcal{L}$ be a reflexive algebra on a Hilbert space $ \Hh $, and there exists a non-trivial idempotent $P\in Alg\mathcal{L}$ with range
$P(\Hh) \in \mathcal{L}$. If $\delta : Alg\mathcal{L}\rightarrow
Alg\mathcal{L}$ is a continuous linear map, then $\delta$ satisfying
\[A,B\in \A, \quad AB+BA=P \Rightarrow 2A\delta(B)+2B\delta(A)=\delta(P), \quad (**) \]
if and only if $\delta(A)=\alpha(A)+\beta(A)+PA(I-P)\delta(I)$, where
\begin{enumerate}
\item[(i)] $\alpha: \A \rightarrow \A$ is a continuous linear map which is a Jordan left derivation and $\alpha(A)=P\alpha(PAP)$ for all $A\in \A$;
\item[(ii)] $\beta:\A\rightarrow \A$ is a continuous linear map satisfying
\[ \beta(A)=(I-P)\beta((I-P)A(I-P))(I-P) \quad for \, \, all \, \, A\in \A, \]
\begin{equation*}
\begin{split}
A,B\in \A, & \quad (I-P)A(I-P)B(I-P)+(I-P)B(I-P)A(I-P)=0\\& \Rightarrow (I-P)A\beta(B)+(I-P)B\beta(A)=0,
\end{split}
\end{equation*}
and
\[ PA(I-P)\beta(B)=PA(I-P)B(I-P)\delta(I) \quad for \, \, all \, \, A,B\in \A. \]
\end{enumerate}
\end{thm}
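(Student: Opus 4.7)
The plan is to prove both directions using the Peirce block decomposition of $\A = Alg\mathcal{L}$ induced by $P$. Let $P_1 = P$, $P_2 = I - P$, and $\A_{ij} = P_i \A P_j$. Since $P \in \A$ and $P(\Hh) \in \mathcal{L}$, every $A \in \A$ preserves $P(\Hh)$; hence $(I-P)AP = 0$, so $\A = \A_{11} \oplus \A_{12} \oplus \A_{22}$ with the upper-triangular structure $\A_{21} = \{0\}$. Writing $A = A_{11} + A_{12} + A_{22}$ (and similarly for $B$), the constraint $AB + BA = P$ decouples into the three block equations $A_{11}B_{11} + B_{11}A_{11} = P$, $A_{11}B_{12} + B_{11}A_{12} + A_{12}B_{22} + B_{12}A_{22} = 0$, and $A_{22}B_{22} + B_{22}A_{22} = 0$.

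For the necessity direction, I will extract identities by substituting carefully chosen pairs into $(**)$. The pair $(P, \tfrac{1}{2}P)$ yields $\delta(P) = 0$; $(P, \tfrac{1}{2}I)$ yields $P\delta(I) = 0$ and hence $\delta(I) \in \A_{22}$; $(P, \tfrac{1}{2}P + X_{22})$ with $X_{22} \in \A_{22}$ yields $P\delta(X_{22}) = 0$, and the upper-triangular structure then forces $\delta(X_{22}) \in \A_{22}$; $(P + X_{12}, \tfrac{1}{2}I - X_{12})$ with $X_{12} \in \A_{12}$ yields $\delta(X_{12}) = X_{12}\delta(I)$; $(P + X_{22}, \tfrac{1}{2}P + Y_{22})$ under the assumption $X_{22}Y_{22} + Y_{22}X_{22} = 0$ yields the Jordan-like relation $X_{22}\delta(Y_{22}) + Y_{22}\delta(X_{22}) = 0$; and finally the elaborate pair $(P + A_{12},\, \tfrac{1}{2}P - \tfrac{1}{2}A_{12} - A_{12}B_{22} + B_{22})$, engineered to satisfy $AB + BA = P$, isolates the cross-term identity $A_{12}\delta(B_{22}) = A_{12}B_{22}\delta(I)$. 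Restricted to $\A_{11}$, the relation $2A_{11}\delta(B_{11}) + 2B_{11}\delta(A_{11}) = 0$ is precisely the hypothesis of Proposition~\ref{ld1} applied to the unital Banach algebra $\A_{11}$ (with unit $P$) and the left $\A_{11}$-module $\A$, so $\delta|_{\A_{11}}$ is a Jordan left derivation; substituting $(A_{11}, P)$ into the Jordan identity gives $2\delta(A_{11}) = 2P\delta(A_{11})$, whence $\delta(A_{11}) \in P\A$.

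Setting $\alpha(A) = P\delta(PAP)$ and $\beta(A) = \delta((I-P)A(I-P))$ produces continuous linear maps whose required form-identities $\alpha(A) = P\alpha(PAP)$ and $\beta(A) = (I-P)\beta((I-P)A(I-P))(I-P)$ are tautological; the Jordan-left-derivation property of $\alpha$ reduces via $PA^2P = A_{11}^2$ and $AP = A_{11}$ to that of $\delta|_{\A_{11}}$, while the two remaining conditions on $\beta$ are exactly the last two substitutions above. The decomposition $\delta(A) = \alpha(A) + \beta(A) + PA(I-P)\delta(I)$ is then obtained by splitting $\delta(A) = \delta(A_{11}) + \delta(A_{12}) + \delta(A_{22})$ and identifying the three summands using $\delta(A_{11}) = P\delta(A_{11})$, $\delta(A_{12}) = A_{12}\delta(I)$, and $\delta(A_{22}) \in \A_{22}$.

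For the converse, assume $\delta$ has the stated form. The Jordan-left-derivation property $\alpha(P^2) = 2P\alpha(P)$ forces $\alpha(P) = 0$, so $\delta(P) = 0$. Expanding $2A\delta(B) + 2B\delta(A)$ via Peirce blocks splits it into three pieces: the first, $2A_{11}\alpha(B_{11}) + 2B_{11}\alpha(A_{11}) = \alpha(A_{11}B_{11} + B_{11}A_{11}) = \alpha(P) = 0$, vanishes by the Jordan-left-derivation property of $\alpha$ together with the first block equation; the second, $2A_{22}\beta(B_{22}) + 2B_{22}\beta(A_{22})$, vanishes by property (ii) of $\beta$ applied to the third block equation; the third, after applying the cross-term identity $A_{12}\beta(B) = A_{12}B_{22}\delta(I)$ to rewrite the $\beta$-contributions, collapses to $2[A_{11}B_{12} + B_{11}A_{12} + A_{12}B_{22} + B_{12}A_{22}]\delta(I)$, which vanishes by the middle block equation. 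The main obstacle is the last substitution in the necessity direction: engineering a pair $(A, B)$ with $AB + BA = P$ whose Peirce expansion cleanly separates the action of $\delta$ on $\A_{12}$ from its action on $\A_{22}$, so as to produce the cross-term identity in the sharp form $A_{12}\delta(B_{22}) = A_{12}B_{22}\delta(I)$ rather than an entangled weaker relation requiring further disentangling.
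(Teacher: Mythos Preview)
Your overall strategy matches the paper's: Peirce-decompose $\A$ via $P$, plug well-chosen pairs into $(**)$ to pin down $\delta$ on each block, and invoke Proposition~\ref{ld1} for the $\A_{11}$-part. Most of your substitutions are in fact cleaner than the paper's (which carries a general invertible $A_{11}$ through several identities), and your treatment of the converse is correct.

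There is, however, one genuine gap in the necessity direction. You apply Proposition~\ref{ld1} to $\delta|_{\A_{11}}:\A_{11}\to\A$, but $\A$ is \emph{not} a unital left $\A_{11}$-module: the unit of $\A_{11}$ is $P$, and $P\cdot m\neq m$ for general $m\in\A$. If one traces through Lemma~\ref{c1} without unitality (using your separately established $\delta(P)=0$), the $k=2$ coefficient yields only
\[
P\,\delta(A_{11}^2)=2A_{11}\delta(A_{11}),
\]
not $\delta(A_{11}^2)=2A_{11}\delta(A_{11})$. Your next step, ``substituting $(A_{11},P)$ into the Jordan identity gives $2\delta(A_{11})=2P\delta(A_{11})$'', then collapses to the tautology $2P\delta(A_{11})=2P\delta(A_{11})$ and does \emph{not} establish $(I-P)\delta(A_{11})=0$. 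But this vanishing is exactly what is needed for the final decomposition: without it, $\delta(A_{11})\neq\alpha(A_{11})$ in general and the identity $\delta(A)=\alpha(A)+\beta(A)+PA(I-P)\delta(I)$ fails. None of the substitutions on your list touches $\delta$ on a generic element of $\A_{11}$ against something in $\A_{22}$, so this information is simply missing. The paper closes the gap by testing $(**)$ on the pair $\bigl(A_{11},\tfrac12 A_{11}^{-1}+A_{22}\bigr)$ for invertible $A_{11}\in\A_{11}$ and arbitrary $A_{22}\in\A_{22}$; applying $\q$ on the left and taking $A_{22}=\q$ gives $\q\,\delta(A_{11})=0$, after which invertibles span $\A_{11}$. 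Once this is in hand your argument goes through; equivalently, you may apply Proposition~\ref{ld1} directly to $\alpha'=P\delta|_{\A_{11}}:\A_{11}\to P\A$, where $P\A$ \emph{is} a unital $\A_{11}$-module, and use $\q\,\delta(A_{11})=0$ only for the decomposition.

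A smaller point: the pair $(P+X_{12},\tfrac12 I-X_{12})$ does not immediately give $\delta(X_{12})=X_{12}\delta(I)$. Expanding $(**)$ leaves an extra term $-4X_{12}\delta(X_{12})$, so you need a scaling argument (replace $X_{12}$ by $tX_{12}$ and separate degrees in $t$) before reading off the linear relation $(I-2P)\delta(X_{12})=-X_{12}\delta(I)$, from which $\delta(X_{12})=X_{12}\delta(I)$ then follows.
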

\begin{proof}
As a notational convenience, we denote $\A=Alg\mathcal{L}$, $\p=P$, $\q=I-P$, $\A_{11}=\p \A \p$, $\A_{12}=\p\A \q$ and $\A_{22}=\q\A \q$. Then we have $\q \A \p=\{0\}$ and hence
\[ \A= \A_{11}\dot{+} \A_{12}\dot{+} \A_{22} \]
as sum of linear spaces. This is so-called the Peirce
decomposition of $\A =Alg\mathcal{L}$. The sets $\A_{11}$,
$\A_{12}$ and $\A_{22}$ are
closed in $\A$. In fact $\A_{11}$ and
$\A_{22}$ are Banach subalgebras of
$\A$ with unity $\p$ and $\q$, respectively and
$\A_{12}$ is a unital Banach
$(\A_{11},\A_{22})$-bimodule. Throughout the proof, $A_{ij}$ and $B_{ij}$ will denote arbitrary elements in $\A_{ij}$ for $1\leq i,j \leq 2$.
\par
Assume that $\delta$ satisfies $(**)$. For $A,B\in \A$ with $AB+BA=2P$, we have $(\frac{1}{2}A)B+B(\frac{1}{2}A)=P$. So it follows that
\[A,B\in \A, \quad AB+BA=2P \Rightarrow A\delta(B)+B\delta(A)=\delta(P). \]
For any $A_{11} \in Inv(\A_{11})$ and $A_{22}\in \A_{22}$, since $A_{11}(A_{11}^{-1}+A_{22})+(A_{11}^{-1}+A_{22})A_{11}=2\p$, ($A_{11}^{-1}$
is the inverse of $A_{11}$ in $\A_{11}$) we have
\begin{equation}\label{e1}
A_{11}\delta(A_{11}^{-1}+A_{22})+(A_{11}^{-1}+A_{22})\delta(A_{11})
=\delta(\p)
\end{equation}
Multiplying this identity by $\q$ both on the left and on the right we find \[A_{22}\delta(A_{11})\q=\q\delta(\p)\q.\]
Now taking $A_{22}=\q$ in this equation, we obtain $\q\delta(A_{11})\q=\q\delta(\p)\q$ and hence $2\q\delta(A_{11})\q=\q\delta(A_{11})\q$ for all $A_{11}\in Inv(\A_{11})$. So $\q\delta(A_{11})\q=0$ for all $A_{11}\in Inv(\A_{11})$. Since any element in a Banach algebras is a sum of invertible
elements, by the linearity of $\delta$ we have
\begin{equation}\label{e2}
\q\delta(A_{11})\q=0,
\end{equation}
for all $A_{11}\in \A_{11}$. Multiplying the Equation~\eqref{e1} by $\p$ both on the left and on the right we arrive at
\[A_{11}\delta(A_{11}^{-1}+A_{22})\p+A_{11}^{-1}\delta(A_{11})\p
=\p\delta(\p)\p.\]
Now letting $A_{11}=\p$ in this equantion, we get $\p\delta(A_{22})\p=-\p\delta(\p)\p$ and therefore $2\p\delta(A_{22})\p=\p\delta(A_{22})\p$ for all $A_{22}\in \A_{22}$. So
 \begin{equation}\label{e3}
\p\delta(A_{22})\p=0,
\end{equation}
for all $A_{22}\in \A_{22}$. Now, multiplying the Equation~\eqref{e1}, from the left by $\p$ and from the right by $\q$, it follows that
\[ A_{11}\delta(A_{11}^{-1}+A_{22})\q+A_{11}^{-1}\delta(A_{11})\q
=\p\delta(\p)\q. \]
Taking $A_{11}=\p$ in this equantion and by a similar arguments as above we have
\begin{equation}\label{e4}
\p\delta(A_{22})\q=0,
\end{equation}
for all $A_{22}\in \A_{22}$.
\par
Since $(A_{11}+A_{12})(A_{11}^{-1}-A_{11}^{-1}A_{12}A_{22}-A_{11}^{-2}A_{12}+A_{22})+(A_{11}^{-1}-A_{11}^{-1}A_{12}A_{22}-A_{11}^{-2}A_{12}+A_{22})(A_{11}+A_{12})=2\p$, for each $A_{11}\in Inv(\A_{11})$, $A_{12}\in \A_{12}$ and $A_{22}\in \A_{22}$, we have
\begin{equation}\label{e5}
\begin{split}
(A_{11}&+A_{12})\delta(A_{11}^{-1}-A_{11}^{-1}A_{12}A_{22}-A_{11}^{-2}A_{12}+A_{22})\\&+(A_{11}^{-1}-A_{11}^{-1}A_{12}A_{22}-A_{11}^{-2}A_{12}+A_{22})\delta(A_{11}+A_{12})=\delta(\p),
\end{split}
\end{equation}
for all $A_{11}\in Inv(\A_{11})$, $A_{12}\in \A_{12}$ and $A_{22}\in \A_{22}$. Multiplying the Equation~\eqref{e1} by $\p$ both on the left and on the right and by the fact that $\q \A \p=\lbrace 0 \rbrace$, we arrive at
\[A_{11}\delta(A_{11}^{-1}-A_{11}^{-1}A_{12}A_{22}-A_{11}^{-2}A_{12}+A_{22})\p + A_{11}^{-1}\delta(A_{11}+A_{12})\p =\p\delta(\p )\p.\]
Now letting $A_{11}=\p$ and $A_{22}=\q$ in this identity and by the Equation~\eqref{e3}, we see that $\p\delta(A_{12})\p=\p \delta(\p)\p$ for all $A_{12}\in \A_{12}$. Hence
\begin{equation}\label{e6}
\p\delta(A_{12})\p=0,
\end{equation}
for all $A_{12}\in \A_{12}$. Multiplying the Equation~\eqref{e5} by $\q$ both on the left and on the right and by the Equation~\eqref{e2}, we get $A_{22}\delta(A_{12})\q=\q \delta(\p)\q$. Replacing $A_{22}$ by $\q$, we find $\q\delta(A_{12})\q=\q \delta(\p)\q$ for all $A_{12}\in \A_{12}$. So
\begin{equation}\label{e7}
\q\delta(A_{12})\q=0,
\end{equation}
for all $A_{12}\in \A_{12}$. Now, multiplying the Equation~\eqref{e5}, from the left by $\p$, from the right by $\q$ and by Equations ~\eqref{e2},~\eqref{e4} and ~\eqref{e7}, we see that
\[-A_{11}\delta(A_{11}^{-1}A_{12}A_{22})\q-A_{11}\delta(A_{11}^{-2}A_{12})\q+A_{12}\delta(A_{22})\q
+A_{11}^{-1}\delta(A_{12})\q=\p \delta(\p)\q.\]
Letting $A_{11}=\p$ in this equation, it follows that $-\p \delta(A_{12}A_{22})\q+A_{12}\delta(A_{22})\q=\p \delta(\p)\q$ for all $A_{12}\in \A_{12}$ and $A_{22}\in \A_{22}$. So $2(-\p\delta(A_{12}A_{22})\q+A_{12}\delta(A_{22})\q)
=-\p\delta(A_{12}A_{22})\q+A_{12}\delta(A_{22})\q$ and hence
\begin{equation}\label{e8}
\p\delta(A_{12}A_{22})\q=A_{12}\delta(A_{22})\q,
\end{equation}
for all $A_{12}\in \A_{12}$ and $A_{22}\in \A_{22}$. By taking $A_{22}=\q$ in ~\eqref{e8} and by Equations ~\eqref{e2}, we have
\begin{equation}\label{e9}
\p\delta(A_{12})\q=A_{12}\delta(\q)\q=A_{12}\delta(I),
\end{equation}
for all $A_{12}\in \A_{12}$. Now from Equations ~\eqref{e8} and ~\eqref{e9}, it follows that
\begin{equation}\label{e10}
A_{12}\delta(A_{22})\q=\p\delta(A_{12}A_{22})\q=A_{12}A_{22}\delta(I),
\end{equation}
for all $A_{12}\in \A_{12}$ and $A_{22}\in \A_{22}$.
\par
Define $\alpha:\A \rightarrow \A$ by $\alpha(A)=\p\delta(\p A\p)$. So $\alpha$ is continuous, $\alpha(I)=\alpha(\p)$ and $\alpha(A)=\p\alpha(\p A\p)$ for all $A\in \A$. Consider $A,B\in \A$ with $AB+BA=\p$. Since $\q \A \p =\lbrace 0 \rbrace$, it follows that $\p A \p B \p+\p B \p A\p=\p$ and hence
\[ 2\p A \p \delta(\p B \p)+2\p B \p\delta(\p A \p)=\delta(\p). \]
So
\[ 2A \p \delta(\p B \p)+2B \p\delta(\p A \p)=\p \delta(\p).\]
Therefore
\[ 2A\alpha(B)+2B\alpha(A)=\alpha(I).\]
Thus $\alpha$ satisfies $(*)$ and by Proposition~\ref{ld1}, $\alpha$ is a Jordan left derivation. Now define $\beta:\A \rightarrow \A$ by $\beta(A)=\q \delta(\q A \q)\q$. It is clear that $\beta(A)=\q \beta(\q A \q)\q$ for all $A\in \A$. Let $A,B\in \A$ with $\q A \q B \q +\q B \q A \q=0$. So $(\p+\q A \q)(\p+\q B \q)+(\p+\q B \q)(\p+\q A \q)=2\p$ and hence
\[ (\p+\q A \q)\delta(\p+\q B \q)+(\p+\q B \q)\delta(\p+\q A \q)=\delta(\p).\]
Multiplying this identity by $\q$ both on the left and on the right, by Equation~\eqref{e2} we find
\[ \q A \q\delta(\q B \q)\q+\q B \q\delta(\q A \q)\q=0.\]
So
\[ \q A\beta(B)+\q B \beta(A)=0.\]
Also from Equation~\eqref{e10} we have
\[ \p A \beta(B)=\p A \q B \q \delta(I),\]
for all $A,B\in \A$.
\par
Now by Equations~\eqref{e2}--~\eqref{e4}, ~\eqref{e6}, ~\eqref{e7} and ~\eqref{e9}, it follows that
\[ \delta(A)=\alpha(A)+\beta(A)+\p A \q\delta(I),\]
for all $A\in \A$. Thus $\delta$ has the desired form.
\par
Conversely, assume that  $\delta$ satisfies the given conditions. For any $A,B\in \A$ with $AB+BA=P$ by the fact that $\q \A \p=\lbrace 0 \rbrace$, we see that
\begin{equation}\label{e11}
\begin{split}
&\p A \p B \p +\p B \p A \p =\p; \\ &
\p A \p B \q +\p A \q B \q + \p B \p A \q + \p B \q A \q =0; \\ &
\q A \q B \q +\q B \q A \q =0.
\end{split}
\end{equation}
Now from Equations~\eqref{e11} and assumptions, for any $A,B\in \A$ with $AB+BA=P$, we have
\begin{equation*}
\begin{split}
 2A\delta(B)+2B\delta(A)&=2A(\alpha(B)+\beta(B)+\p B \q\delta(I))+2B(\alpha(A)+\beta(A)+\p A \q \delta(I))\\& = 2\p A \p \alpha(\p B \p)+ 2\p B \p \alpha(\p A \p) + 2\q A \q \beta(\q B \q)\q \\ &+2\q B \q \beta(\q A \q)\q + 2\p A \q \beta(\q B \q)\q +2\p B \q \beta(\q A \q)\q \\ & + 2\p A \p B \q\delta(I) + 2\p B \p A \q\delta(I)\\& = \alpha(\p) +2\p A \q B \q \delta(I)+2\p B \q A \q \delta(I)\\ &  + 2\p A \p B \q\delta(I) + 2\p B \p A \q\delta(I)\\&=\alpha(\p)=\delta(\p).
\end{split}
\end{equation*}
So $\delta$ satisfies $(**)$ .
\end{proof}
Note that if $P$ is a non-trivial idempotent in
$Alg \mathcal{L}$ satisfying $PP_{L}=P_{L}$ and $P_{L}P=P$ for some non-trivial element $L\in \mathcal{L}$, then $P(\Hh)=L\in \mathcal{L}$.
\par
If $Alg \mathcal{L}$ is a non-trivial $CSL$-algebra on a Hilbert space $ \Hh $, then for every non-trivial element $L\in \mathcal{L}$, we have $P_{L}\in Alg \mathcal{L}$. Also every continuous Jordan left derivation on a $CSL$ algebra
is zero \cite{Ji}. From these facts and Theorem~\ref{asli} we have the following corollary.
\begin{cor}\label{csl}
Let $Alg \mathcal{L}$ be a non-trivial $CSL$-algebra on a Hilbert space $ \Hh $ and $L\in \mathcal{L}$ be non-trivial. If $\delta : Alg\mathcal{L}\rightarrow
Alg\mathcal{L}$ is a continuous linear map, then $\delta$ satisfying
\[A,B\in Alg\mathcal{L}, \quad AB+BA=P_{L} \Rightarrow 2A\delta(B)+2B\delta(A)=\delta(P_{L}), \]
if and only if $\delta(A)=\beta(A)+P_{L}A(I-P_{L})\delta(I)$ for all $A \in Alg \mathcal{L}$, where
$\beta:Alg\mathcal{L}\rightarrow Alg\mathcal{L}$ is a continuous linear map satisfying
\[ \beta(A)=(I-P_{L})\beta((I-P_{L})A(I-P_{L}))(I-P_{L}) \quad for \, \, all \, \, A\in Alg\mathcal{L}, \]
\begin{equation*}
\begin{split}
A,B\in Alg\mathcal{L}, & \quad (I-P_{L})A(I-P_{L})B(I-P_{L})+(I-P_{L})B(I-P_{L})A(I-P_{L})=0\\& \Rightarrow (I-P_{L})A\beta(B)+(I-P_{L})B\beta(A)=0,
\end{split}
\end{equation*}
and
\[ P_{L}A(I-P_{L})\beta(B)=P_{L}A(I-P_{L})B(I-P_{L})\delta(I) \quad for \, \, all \, \, A,B\in Alg\mathcal{L}. \]
\end{cor}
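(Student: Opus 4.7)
The plan is to obtain the corollary as a direct specialization of Theorem~\ref{asli}, with the additional input that continuous Jordan left derivations on CSL-algebras vanish.

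First I would verify that Theorem~\ref{asli} applies with $P = P_{L}$. Because $Alg\mathcal{L}$ is a CSL-algebra, every projection $P_{M}$ with $M\in\mathcal{L}$ commutes with every element of $\mathcal{L}$ (viewed as projections), and in fact lies in $Alg\mathcal{L}$; in particular $P_{L}\in Alg\mathcal{L}$. Moreover $P_{L}$ is a non-trivial idempotent with range $P_{L}(\Hh)=L\in\mathcal{L}$, so the hypothesis of Theorem~\ref{asli} is met.

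Next, Theorem~\ref{asli} yields a decomposition $\delta(A)=\alpha(A)+\beta(A)+P_{L}A(I-P_{L})\delta(I)$, where $\alpha$ is a continuous Jordan left derivation on $Alg\mathcal{L}$ (with the extra relation $\alpha(A)=P_{L}\alpha(P_{L}AP_{L})$) and $\beta$ is a continuous linear map on $Alg\mathcal{L}$ satisfying precisely the three conditions listed in the statement of the corollary. Now invoke the result from \cite{Ji}: every continuous Jordan left derivation on a CSL-algebra is zero. Applied to $\alpha$, this forces $\alpha\equiv 0$, leaving
\[\delta(A)=\beta(A)+P_{L}A(I-P_{L})\delta(I),\]
with $\beta$ satisfying exactly the stated properties. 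This proves the forward direction.

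For the converse, suppose $\delta$ has the claimed form. Since $\alpha\equiv 0$ is trivially a continuous Jordan left derivation with $\alpha(A)=P_{L}\alpha(P_{L}AP_{L})$, and $\beta$ satisfies conditions (i)--(ii) of Theorem~\ref{asli} (with $P=P_{L}$), the converse half of Theorem~\ref{asli} immediately yields that $\delta$ satisfies the implication $AB+BA=P_{L}\Rightarrow 2A\delta(B)+2B\delta(A)=\delta(P_{L})$. There is no real obstacle here: the corollary is a straightforward corollary, and the only substantive point to make explicit is the justification that $P_{L}\in Alg\mathcal{L}$ (using the CSL hypothesis) together with the citation to \cite{Ji} to kill the $\alpha$ summand.
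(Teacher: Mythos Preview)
Your proposal is correct and follows exactly the approach the paper takes: observe that for a non-trivial CSL-algebra one has $P_{L}\in Alg\mathcal{L}$ with range $L\in\mathcal{L}$, apply Theorem~\ref{asli}, and then invoke \cite{Ji} to force the Jordan left derivation $\alpha$ to vanish. The converse is likewise handled by feeding $\alpha\equiv 0$ back into the converse direction of Theorem~\ref{asli}, just as you describe.
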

To prove the next corollary, we need the following lemma from \cite [Theorem 3.4]{Lu}.
\begin{lem}\label{ann}
Let $Alg \mathcal{L}$ be an irreducible $CDC$ algebra on a Hilbert space $\Hh$, then there is a non-trivial element $L\in \mathcal{L}$ such that for $A\in Alg \mathcal{L}$, $P_{L}Alg \mathcal{L}(I-P_{L})A=\lbrace 0 \rbrace$ implies $(I-P_{L})A=0$.
\end{lem}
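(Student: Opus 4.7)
The plan is to reduce the statement to a density claim about rank-one operators in $Alg\mathcal{L}$ via the Longstaff characterization: the rank-one operator $x \otimes y$, given by $(x \otimes y)(z) = \langle z, y \rangle x$, belongs to $Alg\mathcal{L}$ if and only if there exists $M \in \mathcal{L}$ with $x \in M$ and $y \in M_{-}^{\perp}$. The goal is to locate a non-trivial $L \in \mathcal{L}$ for which the rank-one operators of this form with $x \in L$ are abundant enough to separate operators acting on $\Hh$ from the right.

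First I would exploit complete distributivity together with irreducibility to choose the lattice element $L$. Complete distributivity, $L=\vee\{M \in \mathcal{L} \,|\, L \nsubseteq M_{-}\}$, supplies non-trivial $M \in \mathcal{L}$ with $M_{-} \neq \Hh$, so that $M_{-}^{\perp} \neq \{0\}$; the irreducibility hypothesis $(Alg\mathcal{L})^{\prime}=\mathbb{C}I$ prevents the lattice from splitting $\Hh$ into orthogonal invariant summands, and this forces the union $\bigcup\{M_{-}^{\perp} \,|\, \{0\} \neq M \in \mathcal{L},\, M \subseteq L\}$ to have dense linear span in $\Hh$ for a suitably chosen (for instance, a minimal non-zero join-irreducible) non-trivial $L$.

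Given such an $L$, suppose $A \in Alg\mathcal{L}$ satisfies $P_{L}Alg\mathcal{L}(I-P_{L})A=\{0\}$ and set $B=(I-P_{L})A$. For every non-zero $x \in M \subseteq L$ and every $y \in M_{-}^{\perp}$, the rank-one operator $x \otimes y$ lies in $Alg\mathcal{L}$, and one computes $(x \otimes y)B = x \otimes B^{*}y$, which is fixed by left multiplication by $P_{L}$ because $x \in L$. The hypothesis then gives $x \otimes B^{*}y=0$, whence $B^{*}y=0$. Letting $y$ range over the dense union obtained in the previous step yields $B^{*}=0$, hence $B=0$, which is the desired conclusion $(I-P_{L})A=0$.

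The main obstacle is the first step: producing a non-trivial $L$ for which $\bigcup\{M_{-}^{\perp} \,|\, \{0\} \neq M \in \mathcal{L},\, M \subseteq L\}$ spans densely in $\Hh$. For a general CSL this can fail, so the CDC hypothesis is essential; the argument must combine known rank-one density theorems for completely distributive lattices with the spatial rigidity coming from irreducibility of the commutant to rule out degenerate choices of $L$.
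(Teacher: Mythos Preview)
The paper does not give its own proof of this lemma; it is simply quoted from \cite[Theorem~3.4]{Lu}. So there is nothing in the paper to compare your argument against.

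That said, your outline is the standard route to such faithfulness statements for CDC algebras: Longstaff's rank-one criterion together with the computation $P_{L}(x\otimes y)(I-P_{L})A = x\otimes B^{*}y$ for $x\in M\subseteq L$, $y\in M_{-}^{\perp}$, $B=(I-P_{L})A$, reduces everything to the density of $\bigcup\{M_{-}^{\perp}: \{0\}\neq M\in\mathcal{L},\, M\subseteq L\}$ in $\Hh$. You have correctly isolated this density claim as the entire content of the lemma.

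Where your sketch is genuinely incomplete is precisely there. The parenthetical suggestion ``a minimal non-zero join-irreducible $L$'' is not a safe choice: an infinite completely distributive lattice need not have atoms or minimal join-irreducibles, and even when such an $L$ exists the associated family $\{M_{-}^{\perp}: \{0\}\neq M\subseteq L\}$ may reduce to the single subspace $L_{-}^{\perp}$, which has no reason to be dense. The actual selection of $L$ in Lu's argument is more delicate and uses irreducibility in a sharper way than merely ruling out orthogonal invariant summands. If you want a self-contained proof, you will need to reproduce that step from \cite{Lu}; the rest of your argument is fine.
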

\begin{cor}\label{cdc}
Let $Alg \mathcal{L}$ be an irreducible $CDC$ algebra on a Hilbert space $\Hh$, and let $P_{L} \in Alg \mathcal{L}$ be the projection in Lemma~\ref{ann}. If $\delta : Alg\mathcal{L}\rightarrow
Alg\mathcal{L}$ is a continuous linear map, then $\delta$ satisfying
\[A,B\in Alg\mathcal{L}, \quad AB+BA=P_{L} \Rightarrow 2A\delta(B)+2B\delta(A)=\delta(P_{L}), \]
if and only if $\delta(A)=A\delta(I)$ for all $A \in Alg \mathcal{L}$ and $P_{L}\delta(I)=0$.
\end{cor}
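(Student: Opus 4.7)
The plan is to apply Corollary~\ref{csl}, which is the $CSL$-algebra case already in hand, and then use the annihilator property of Lemma~\ref{ann} to force the residual map $\beta$ into a completely explicit form. Since an irreducible $CDC$-algebra is in particular a non-trivial $CSL$-algebra and $P_L$ is a non-trivial projection in $Alg\mathcal{L}$, Corollary~\ref{csl} applies and yields
\[ \delta(A)=\beta(A)+P_LA(I-P_L)\delta(I), \]
with $\beta(A)\in (I-P_L)Alg\mathcal{L}(I-P_L)$ satisfying the three listed bullet conditions. The entire task then reduces to showing $\beta(A)=(I-P_L)A(I-P_L)\delta(I)$ for every $A$: granting this, the two pieces combine to $\delta(A)=A(I-P_L)\delta(I)$, and setting $A=I$ gives $\delta(I)=(I-P_L)\delta(I)$, hence $P_L\delta(I)=0$ and $\delta(A)=A\delta(I)$.

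To establish the formula for $\beta$, I would fix $B\in Alg\mathcal{L}$ and set
\[ Y=\beta(B)-(I-P_L)B(I-P_L)\delta(I). \]
Because $\beta(B)=(I-P_L)\beta(B)(I-P_L)$, one checks that $(I-P_L)Y=Y$, so $P_LA(I-P_L)Y=P_LAY$ for every $A\in Alg\mathcal{L}$. The third identity of Corollary~\ref{csl} reads $P_LA(I-P_L)\beta(B)=P_LA(I-P_L)B(I-P_L)\delta(I)$, which rearranges precisely to $P_LA(I-P_L)Y=0$ for all $A$; equivalently, $P_L\,Alg\mathcal{L}\,(I-P_L)\,Y=\{0\}$. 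Lemma~\ref{ann} then forces $(I-P_L)Y=0$, and since $Y=(I-P_L)Y$ we conclude $Y=0$, as desired.

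For the converse, assume $\delta(A)=A\delta(I)$ and $P_L\delta(I)=0$. Then $\delta(P_L)=P_L\delta(I)=0$, and for any $A,B\in Alg\mathcal{L}$ with $AB+BA=P_L$,
\[ 2A\delta(B)+2B\delta(A)=2(AB+BA)\delta(I)=2P_L\delta(I)=0=\delta(P_L), \]
so the stated implication holds. The only non-routine point in the whole argument is the algebraic maneuver in the middle paragraph: one must recognize that the bilinear constraint on $\beta$ can be recast in the precise form $P_L\,Alg\mathcal{L}\,(I-P_L)\,Y=\{0\}$ demanded by Lu's annihilator lemma, by exploiting the observation that $\beta(B)$ is left-absorbed by $I-P_L$. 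Once that observation is in place, the remainder of the proof is linear book-keeping.
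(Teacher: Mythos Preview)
Your argument is correct and follows essentially the same route as the paper's proof: apply Corollary~\ref{csl}, recast the third condition on $\beta$ as $P_L\,Alg\mathcal{L}\,(I-P_L)\,Y=\{0\}$, invoke Lemma~\ref{ann} to kill $Y$, and then read off $\delta(A)=A\delta(I)$ and $P_L\delta(I)=0$. The only cosmetic differences are that the paper takes $Y=\beta(B)-B(I-P_L)\delta(I)$ (which agrees with yours after left-multiplying by $I-P_L$) and derives $P_L\delta(I)=0$ directly from $P_L\delta(I)=P_L\beta(I-P_L)=0$ rather than by specializing $A=I$ at the end.
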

\begin{proof}
First, we give the proof of 'only if' part. By Corollary~\ref{csl}, we have $\delta(A)=\beta(A)+P_{L}A(I-P_{L})\delta(I)$ for all $A \in Alg \mathcal{L}$, where
$\beta:Alg\mathcal{L}\rightarrow Alg\mathcal{L}$ is a continuous linear map satisfying
\[ \beta(A)=(I-P_{L})\beta((I-P_{L})A(I-P_{L}))(I-P_{L}) \quad for \, \, all \, \, A\in \A, \]
and
\[ P_{L}A(I-P_{L})\beta(B)=P_{L}A(I-P_{L})B(I-P_{L})\delta(I) \quad for \, \, all \, \, A,B\in \A. \]
So $\delta(P_{L})=\beta(P_{L})=0$ and $P_{L}\delta(I)=P_{L}\delta(I-P_{L})=P_{L}\beta(I-P_{L})=0$. Also we have $P_{L}Alg \mathcal{L}(I-P_{L})(\beta(B)-B(I-P_{L})\delta(I))=\lbrace 0 \rbrace$ for all $B \in Alg \mathcal{L}$. Therefore by Lemma~\ref{ann}, we have
\[ \beta(B)=(I-P_{L})B(I-P_{L})\delta(I), \]
for all $B \in Alg \mathcal{L}$. Now from these results, it follows that
\begin{equation*}
\begin{split}
 \delta(A)&=\beta(A)+P_{L}A(I-P_{L})\delta(I)\\&=(I-P_{L})A(I-P_{L})\delta(I)+P_{L}A(I-P_{L})\delta(I)\\&=A(I-P_{L})\delta(I)\\&=A\delta(I),
\end{split}
\end{equation*}
for all $A \in Alg \mathcal{L}$.
\par
Next, we check the 'if part'. For any $A,B\in Alg \mathcal{L}$ with $AB+BA=P_{L}$, we have
\begin{equation*}
\begin{split}
2A\delta(B)+2B\delta(A)&=2AB\delta(I)+2BA\delta(I)\\&
=2P_{L}\delta(I)\\&=0=\delta(P_{L}),
\end{split}
\end{equation*}
since $\delta(P_{L})=P_{L}\delta(I)=0$. So $\delta$ has the desired form.
\end{proof}
Let $Alg \mathcal{N}$ be a non-trivial nest algebra on a Hilbert space $ \Hh $, then for every non-trivial element $N\in \mathcal{N}$, $P_{N}Alg \mathcal{N}(I-P_{N})A=\lbrace 0 \rbrace$ implies $(I-P_{N})A=0$ ($A\in Alg \mathcal{N} $). By using similar arguments as that in the proof of Corollary~\ref{cdc}, we get the next corollary.
\begin{cor}\label{nest}
Let $Alg \mathcal{N}$ be a nest algebra on a Hilbert space $\Hh$, and let $N \in \mathcal{N}$ be a non-trivial element. If $\delta : Alg\mathcal{N}\rightarrow
Alg\mathcal{N}$ is a continuous linear map, then $\delta$ satisfying
\[A,B\in Alg\mathcal{N}, \quad AB+BA=P_{N} \Rightarrow 2A\delta(B)+2B\delta(A)=\delta(P_{N}), \]
if and only if $\delta(A)=A\delta(I)$ for all $A \in Alg \mathcal{N}$ and $P_{N}\delta(I)=0$.
\end{cor}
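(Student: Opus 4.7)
The plan is to mimic the proof of Corollary~\ref{cdc} almost verbatim, since the only ingredient in that proof beyond Corollary~\ref{csl} was the annihilator property supplied by Lemma~\ref{ann}, and the paragraph preceding Corollary~\ref{nest} records exactly the analogous property for nest algebras. First I would observe that every nest $\mathcal{N}$ is totally ordered, hence a commutative subspace lattice, so $Alg\mathcal{N}$ is a (non-trivial) CSL-algebra and $P_{N} \in Alg\mathcal{N}$ for any non-trivial $N \in \mathcal{N}$ (as $P_{N}$ commutes with every projection in $\mathcal{N}$ and leaves each element of $\mathcal{N}$ invariant). Thus $P_N$ is a non-trivial idempotent in $Alg\mathcal{L}$ with range $P_{N}(\Hh)=N\in \mathcal{N}$, and Corollary~\ref{csl} applies.

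Applying Corollary~\ref{csl} with $L=N$, I get $\delta(A)=\beta(A)+P_{N}A(I-P_{N})\delta(I)$ for all $A\in Alg\mathcal{N}$, where $\beta$ satisfies
\[ \beta(A)=(I-P_{N})\beta((I-P_{N})A(I-P_{N}))(I-P_{N}), \]
and
\[ P_{N}A(I-P_{N})\beta(B)=P_{N}A(I-P_{N})B(I-P_{N})\delta(I) \]
for all $A,B \in Alg\mathcal{N}$. From the latter identity, for any fixed $B$ the element $\beta(B)-(I-P_{N})B(I-P_{N})\delta(I)$ is annihilated on the left by $P_{N}Alg\mathcal{N}(I-P_{N})$. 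I would then invoke the nest-algebra annihilator property stated just before Corollary~\ref{nest} to conclude
\[ \beta(B)=(I-P_{N})B(I-P_{N})\delta(I) \qquad (B\in Alg\mathcal{N}). \]
Substituting this back and adding $P_{N}B(I-P_{N})\delta(I)$ yields $\delta(B)=B(I-P_{N})\delta(I)$. Specializing $B=P_{N}$ gives $\delta(P_{N})=0$, and since by the displayed form $P_{N}\delta(I)=P_{N}\beta(I-P_{N})=0$, we recover $\delta(B)=B\delta(I)$ together with $P_{N}\delta(I)=0$.

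For the converse, suppose $\delta(A)=A\delta(I)$ for all $A \in Alg\mathcal{N}$ and $P_{N}\delta(I)=0$. For any $A,B\in Alg\mathcal{N}$ with $AB+BA=P_{N}$, a direct computation gives
\[ 2A\delta(B)+2B\delta(A)=2(AB+BA)\delta(I)=2P_{N}\delta(I)=0=\delta(P_{N}), \]
so $\delta$ satisfies the required implication. The main (and essentially only non-mechanical) obstacle is the annihilator property for nests, but this is recorded in the excerpt as folklore, so the entire argument reduces to straightforward bookkeeping parallel to Corollary~\ref{cdc}.
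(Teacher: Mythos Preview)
Your proposal is correct and follows precisely the route the paper intends: it simply reruns the proof of Corollary~\ref{cdc} with the nest-algebra annihilator property (stated just before Corollary~\ref{nest}) in place of Lemma~\ref{ann}. The paper itself gives no separate argument beyond ``by using similar arguments as that in the proof of Corollary~\ref{cdc}'', so your write-up is exactly what is meant.
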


% ------------------------------------------------------------------------

%\subsection*{Acknowledgment}

% ------------------------------------------------------------------------
%GATHER{Xbib.bib}   % For Gather Purpose Only
%GATHER{Paper.bbl}  % For Gather Purpose Only
\bibliographystyle{amsplain}
\bibliography{xbib}

\end{document}